
%
%
%

\documentclass[twoside,12pt]{article}
\textwidth 16,5cm \textheight 23cm \topmargin -1cm \oddsidemargin 0.5cm \evensidemargin -0.5cm

\usepackage{amsmath,amstext,amsgen,amsbsy,amsopn}
\usepackage{amssymb,amsfonts}
\usepackage{amsthm}
\usepackage{latexsym,amsxtra,euscript,amscd}

\usepackage{float}
\usepackage{xcolor}

\usepackage{booktabs}
\newcommand{\ra}[1]{\renewcommand{\arraystretch}{#1}}

\usepackage[colorlinks=true,urlcolor=blue,linkcolor=black]{hyperref}

\usepackage{graphics,graphicx}
\graphicspath{{Fig/},{Fig3D/},{../Nemeth_tiling_walking}}
\pdfminorversion=7

\newtheorem{theorem}{Theorem}
\newtheorem{lemma}{Lemma}[section]
\newtheorem{corollary}{Corollary}

\newcommand*{\ADRnl}{ORCID: 0000-0001-9062-9280, University of Sopron,  Institute of Basic Sciences, Departement of Mathematics, Bajcsy-Zs.~u.~4.\ 9400, Sopron, Hungary. \texttt{nemeth.laszlo@uni-sopron.hu}}

\newcommand*{\TIT}{Walks on tiled boards}
\pagestyle{myheadings}
\markboth{{\small\rm \hfill L. N\'emeth
		\hfill}\hspace{-\textwidth}
	\underline{${{}_{}}_{}$\hspace{\textwidth}}}
{\underline{${{}_{}}_{}$\hspace{\textwidth}}\hspace{-\textwidth}
	{\small\rm \hfill \TIT
		\hfill}}
\title{\bf \TIT}

\author{L\'aszl\'o N\'emeth\footnote{\ADRnl}}
\date{}


\begin{document}
	
	\maketitle \thispagestyle{empty}
	
	\begin{abstract}
		Several articles deal with tilings with various shapes, and also a very frequent type of combinatorics is to examine the walks on graphs or on grids. We combine these two things and give the numbers of the shortest walks crossing the tiled $(1\times n)$ and $(2\times n)$ square grids by covering them with squares and dominoes. We describe these numbers not only recursively, but also as rational polynomial linear combinations of Fibonacci numbers.
		\\[1mm]
		The  final  publication  is  available  via  journal   \href{https://www.degruyter.com/journal/key/ms/html#issues}{Mathematica Slovaca}. \\[1mm]
		{\em Key Words: Tiling, walk on square grids, self-avoiding walk, recurrence sequence, Fibonacci sequence.}\\
		{\em MSC code:  Primary 05B45, 11B37; Secondary 05C38, 52C20, 11Y55.} \\[1mm] 
	\end{abstract}
	
	

	\section{Introduction}
	
	Hundreds, if not thousands, of articles can be found about tilings and walks on graphs, but to the best of our knowledge, there are not any paper that does both at the same time. In this article, we connect these two subfields of combinatorics as follows.

	Let $t_n$ be the number of the different tilings ($n$-tilings) with $(1\times 1)$-squares and $(1\times 2)$-dominoes  (two squares with a common edge) of a $(2\times n)$-board or square grid. Hereafter, a square always means $(1\times 1)$-square and a domino means $(1\times 2)$-domino.
	It is known, e.g., in  \cite{BQ}, that the number of possible tilings of a $(1\times n)$-board is $t_n=F_{n+1}$, where $(F_n)$ denotes the Fibonacci sequence (defined by $F_n=F_{n-1}+F_{n-2}$, $F_0=0$, $F_1=1$ and {A000045} in OEIS \cite{oeis}). 	 
	McQuistan and Lichtman \cite{ML} (generalizations in \cite{Kah}) studied the $(2\times n)$-board tilings with squares and dominoes in case of the Euclidean square mosaic, and they proved that $t_n$ satisfies the third-order recurrence relation $t_{n}=3t_{n-1}+t_{n-2}-t_{n-3}$ ({A030186} in OEIS \cite{oeis}). 
	Some researchers generalized the tiling with colored shapes \cite{Belb,Ben}.  Moreover, Komatsu, N\'emeth, and Szalay \cite{KNSz} examined the tilings with colored squares and dominoes on the hyperbolic $(2\times n)$-board, and they gave the fourth order linear homogeneous recurrence relation of $t_n$, where the coefficients satisfy recurrence sequences, as well.
	
	A self-avoiding walk on a graph is a walk that never visits the same vertex more than once. In the literature, self-avoiding walks have been studied mostly on infinite graphs. A central problem in this area is to consider the  number of distinct self-avoiding walks of a prescribed length on regular grids (see book of Madras and Slade \cite{mad} or  articles by Benjamin \cite{benja} and Williams \cite{will}). 
	However, self-avoiding walks have also been studied on finite graphs, such as square grids, i.e., \cite{bmg,bur,mad1,whi}, rectangular grids by Abbot \cite{Abbot} or complete graphs by Slade \cite{sla}. Major~et~al.\  \cite{MNSz} examined the self-avoiding walks on square grids in a special way when they defined forbidden direction steps, and they gave the numbers of walks in case of some special grids.
	
	In this article, we combine both of the above-mentioned combinatorial examinations. We consider square grids or boards and all the possible tilings with objects of them, and then we calculate the numbers of the shortest walks  crossing the tiled grids. 
	In Figure~\ref{fig:Til_walk_example_0s} there are four examples with tiled boards and walks crossing the boards. 	
	
	\begin{figure}[!ht]
		\centering
		\includegraphics[scale=0.9]{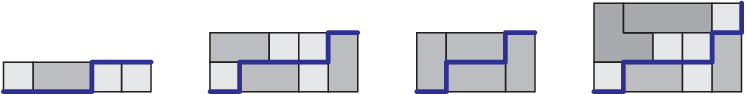} 
		\caption{Examples for walks on tiled boards}
		\label{fig:Til_walk_example_0s}
		\textbf{}\end{figure}
	
	In particular, we consider all the tilings with squares and dominoes on the boards $(1\times n)$ and $(2\times n)$ respectively, and then we give the number of the shortest walks crossing the tiled boards.   Obviously, these walks are self-avoiding walks and their lengths are $n+1$ and $n+2$ square-side-long, respectively. Of course, a walk must not cross any dominoes.
	
	We say that a walk crosses a board, if the starting and the ending points are at opposite corners of the board. 
	Any walk starts at the left-down vertex and ends at its right-up vertex. 
	Figure~\ref{fig:Til_walk_example_2x3} presents all the walks on the $2\times3$-board in case of a tiling.
	
	\begin{figure}[!ht]
		\centering
		\includegraphics[scale=0.9]{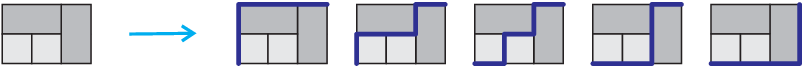} 
		\caption{Walks on $2\times3$-board with a given tiling}
		\label{fig:Til_walk_example_2x3}
	\end{figure}
	
	Let $(v_n)$ and $(w_n)$ be the so-called tiling-walking sequences, where $v_n$ and  $w_n$ respectively give the numbers of all the walks on all the tiled boards in case of $(1\times n)$ and $(2\times n)$, as well.
	
	The following theorems and corollary summarize our main results for the $(1\times n)$-board and $(2\times n)$-board. Since the numbers of the tilings with squares and dominoes on $(1\times n)$-board and with exclusively dominoes on $(2\times n)$-board are the Fibonacci numbers, then we describe the tiling-walking sequences in both cases  with the Fibonacci number as well in the corollaries. Furthermore, we mention that Liptai~et~al.\  \cite{Fibo_rep01} described some families of integer recurrence sequences as rational polynomial linear combinations of Fibonacci numbers. 
	\begin{theorem}\label{th:main_1xn}
		The tiling-walking sequence $(v_n)_{n=0}^\infty$ of the $(1\times n)$-board with squares and dominoes has the recurrence relation
		\begin{equation}\label{eq:main_1xn}
			nv_{n}=(n+1)v_{n-1}+(n+2)v_{n-2}, \quad n\geq 2,
		\end{equation} 
		where the initial values are $v_0=1$, $v_1=2$ (see {A001629} in OEIS\cite{oeis}). 
	\end{theorem}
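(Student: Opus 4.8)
The plan is to reduce the double count (over tilings and over walks) to a single generating function, and then read off the recurrence from the first-order differential equation that this generating function satisfies.

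First I would analyze the walks on one fixed tiling. A shortest crossing walk of the $(1\times n)$-board runs from the lower-left vertex $(0,0)$ to the upper-right vertex $(n,1)$ and has length $n+1$, so it is a monotone lattice path making exactly one vertical step. Such a path is completely determined by the vertical grid line $x=k$ on which it ascends, $0\le k\le n$, giving $n+1$ candidate paths. The ascent on $x=k$ is blocked precisely when a domino covers the two cells adjacent to that line; since the board is one cell tall, every domino is horizontal and blocks exactly one such line, and distinct dominoes block distinct lines. Hence, writing $d(T)$ for the number of dominoes in a tiling $T$, the number of admissible walks on $T$ equals $(n+1)-d(T)$.

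Summing over all tilings then gives $v_n=(n+1)t_n-D_n$, where $t_n=F_{n+1}$ and $D_n=\sum_T d(T)$ is the total number of dominoes among all $n$-tilings. Next I would encode both quantities by generating functions. Identifying tilings with compositions of $n$ into $1$'s and $2$'s and marking each domino by a variable $y$ yields $\sum_{n\ge0}\bigl(\sum_T y^{d(T)}\bigr)x^n=\tfrac{1}{1-x-yx^2}$; setting $y=1$ recovers $\sum_n t_n x^n=\tfrac{1}{1-x-x^2}=:T(x)$, and differentiating in $y$ at $y=1$ gives $\sum_n D_n x^n=\tfrac{x^2}{(1-x-x^2)^2}$. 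Likewise $\sum_n(n+1)t_nx^n=\tfrac{d}{dx}\bigl(xT(x)\bigr)=\tfrac{1+x^2}{(1-x-x^2)^2}$. Subtracting, the two numerators collapse and I obtain the clean form
\[
V(x):=\sum_{n\ge0}v_nx^n=\frac{1}{(1-x-x^2)^2}.
\]

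Finally I would turn this closed form into the stated recurrence. Logarithmic differentiation of $V=(1-x-x^2)^{-2}$ gives the linear ODE $(1-x-x^2)V'(x)=(2+4x)V(x)$; comparing coefficients of $x^n$ on both sides (using $V'=\sum_{n\ge0}(n+1)v_{n+1}x^n$) produces $(n+1)v_{n+1}-nv_n-(n-1)v_{n-1}=2v_n+4v_{n-1}$, which after reindexing $n\mapsto n-1$ is exactly $nv_n=(n+1)v_{n-1}+(n+2)v_{n-2}$ for $n\ge2$. A direct check of $v_0=1$ and $v_1=2$ finishes the argument. I expect the only delicate point to be the first step — verifying that the shortest walks are exactly the single-ascent monotone paths and that domino-blocking gives a bijection onto a subset of the ascent lines; the generating-function manipulations and the ODE-to-recurrence bookkeeping are then routine.
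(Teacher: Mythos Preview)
Your argument is correct. You and the paper share the crucial combinatorial observation that on a fixed tiling with $k$ dominoes there are exactly $n-k+1$ admissible walks, but from there the routes diverge. The paper proceeds entirely without generating functions: it first derives $v_n=v_{n-1}+v_{n-2}+F_{n+1}$ by a last-tile case analysis, then evaluates $v_n=\sum_k\binom{n-k}{k}(n-k+1)$ directly to obtain the companion relation $v_n+v_{n-2}=(n+1)F_{n+1}$, and finally eliminates $F_{n+1}$ between these two to reach the stated recurrence. Your route instead packages the same walk count into the closed form $V(x)=1/(1-x-x^2)^2$ and extracts the recurrence from the ODE $(1-x-x^2)V'=(2+4x)V$. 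Your approach has the advantage of producing the generating function (identifying $(v_n)$ as a shift of the Fibonacci convolution sequence) in one stroke and making the derivation of the holonomic recurrence mechanical; the paper's approach, on the other hand, is fully elementary and yields as by-products the two auxiliary identities $v_n=v_{n-1}+v_{n-2}+F_{n+1}$ and $v_n+v_{n-2}=(n+1)F_{n+1}$, which it reuses in the proof of Corollary~\ref{cor:main_1xn}.
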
 
	
	\begin{corollary}\label{cor:main_1xn}
		The tiling-walking sequence $(v_n)_{n=0}^\infty$ of the $(1\times n)$-board with squares and dominoes is recursively given by the $4$th order constant coefficients homogeneous linear recurrence relation
		\begin{equation*}\label{cor:v}
			v_{n}=2v_{n-1}+v_{n-2}-2v_{n-3}-v_{n-4}, \quad n\geq 4,
		\end{equation*} 
		where the initial values are $v_0=1$, $v_1=2$, $v_2=5$, and $v_3=10$.
		Moreover, for $ n\geq 0$, we obtain the equation
		\begin{equation*}
			5\,v_{n}=2(n+2)\,F_{n+1}+(n+1)\,F_{n+2}, 
		\end{equation*}
		where $F_{n}$ is the $n$th Fibonacci number.
	\end{corollary}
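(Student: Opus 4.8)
The plan is to route everything through the ordinary generating function $V(x)=\sum_{n\ge 0}v_nx^n$ and to show that it collapses to the remarkably simple form $V(x)=1/(1-x-x^2)^2$; both assertions of the corollary then drop out almost mechanically. First I would turn the variable-coefficient recurrence of Theorem~\ref{th:main_1xn} into a differential equation for $V$. Before summing I would check that, with the convention $v_{-1}=v_{-2}=0$, the relation $nv_n=(n+1)v_{n-1}+(n+2)v_{n-2}$ in fact holds for every $n\ge 0$ (the cases $n=0,1$ are immediate), so that no boundary corrections appear. Multiplying by $x^n$, summing, and using $\sum_n nv_nx^n=xV'$ together with the shift identities $\sum_n(n+1)v_{n-1}x^n=x^2V'+2xV$ and $\sum_n(n+2)v_{n-2}x^n=x^3V'+4x^2V$, the recurrence becomes the separable first-order ODE
\[
(1-x-x^2)\,V'=2(1+2x)\,V .
\]
Since $2(1+2x)/(1-x-x^2)=-2\,(1-x-x^2)'/(1-x-x^2)$, integration gives $V=C\,(1-x-x^2)^{-2}$, and $V(0)=v_0=1$ forces $C=1$.

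With $V(x)=1/(1-x-x^2)^2$ in hand, the fourth-order recurrence is just the statement that $(1-x-x^2)^2V=1$. Expanding $(1-x-x^2)^2=1-2x-x^2+2x^3+x^4$ and reading off the coefficient of $x^n$ for $n\ge 4$, where the right-hand side vanishes, yields $v_n-2v_{n-1}-v_{n-2}+2v_{n-3}+v_{n-4}=0$, i.e.\ the claimed relation. The initial values $v_2=5$ and $v_3=10$ follow by evaluating Theorem~\ref{th:main_1xn} at $n=2,3$.

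For the Fibonacci formula I would write $F(x):=1/(1-x-x^2)=\sum_{n\ge0}F_{n+1}x^n$, so that $V=F^2$ and $F'=(1+2x)/(1-x-x^2)^2=(1+2x)V$. Setting $S(x):=\sum_{n\ge0}\bigl(2(n+2)F_{n+1}+(n+1)F_{n+2}\bigr)x^n$, I would express each piece as a derivative of $F$: from $xF=\sum_{m\ge0}F_mx^m$ one gets $\sum_n(n+2)F_{n+1}x^n=2F+xF'$, and from $xG=F-1$ with $G=\sum_nF_{n+2}x^n$ one gets $\sum_n(n+1)F_{n+2}x^n=F'$. Hence $S=4F+(1+2x)F'$, and substituting $F=1/(1-x-x^2)$ and $F'=(1+2x)/(1-x-x^2)^2$ makes the numerator collapse:
\[
S=\frac{4(1-x-x^2)+(1+2x)^2}{(1-x-x^2)^2}=\frac{5}{(1-x-x^2)^2}=5\,V(x),
\]
which is precisely $5v_n=2(n+2)F_{n+1}+(n+1)F_{n+2}$ after comparing coefficients.

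The only genuinely delicate point is the passage from the recurrence to the ODE: one must get the index shifts and the accompanying $xV$-terms exactly right and confirm that the recurrence is valid down to $n=0$, since an error there changes the particular solution. Everything after the identification $V=(1-x-x^2)^{-2}$ is routine coefficient extraction. As an independent check, or as an alternative to the generating-function route for the closed form, I could instead set $R_n:=2(n+2)F_{n+1}+(n+1)F_{n+2}$ and verify directly, using $F_{n+2}=F_{n+1}+F_n$ and $F_{n-1}=F_{n+1}-F_n$, that $R_n$ satisfies the same recurrence $nR_n=(n+1)R_{n-1}+(n+2)R_{n-2}$; since $R_0=5$ and $R_1=10$ match $5v_0$ and $5v_1$, induction via Theorem~\ref{th:main_1xn} then gives $R_n=5v_n$ for all $n\ge 0$.
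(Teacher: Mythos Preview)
Your proof is correct, but it proceeds along a genuinely different path than the paper. You convert Theorem~\ref{th:main_1xn} into a first-order ODE for the generating function, solve it to obtain the closed form $V(x)=(1-x-x^2)^{-2}$, and then read off both assertions from that single identity. The paper never writes down a generating function: for the fourth-order recurrence it posits the form $v_n=Av_{n-1}+Bv_{n-2}+Cv_{n-3}+Dv_{n-4}$, substitutes the Theorem~\ref{th:main_1xn} relation and its shifts, and solves a small linear system for $A,B,C,D$; for the Fibonacci identity it combines the two auxiliary relations $v_n=v_{n-1}+v_{n-2}+F_{n+1}$ and $v_n+v_{n-2}=(n+1)F_{n+1}$ (established earlier in the text) by adding and subtracting suitable shifts. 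Your route is more conceptual and exposes the structural fact $V=F^2$ (so $v_n=\sum_k F_{k+1}F_{n-k+1}$, the Fibonacci self-convolution), from which further identities are within reach; the paper's route is more elementary, avoiding calculus entirely, and as a byproduct also verifies that no third-order constant-coefficient recurrence exists---a point your argument does not address, though the corollary does not require it.
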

	
	\begin{theorem}\label{th:main_2xn}
		The tiling-walking sequence $(w_n)_{n=0}^\infty$ of the $(2\times n)$-board with squares and dominoes is recursively given by the $9$th order homogeneous linear recurrence relation $(n\geq 9)$
		\begin{equation}\label{eq:main_2xn}
			w_{n}=8w_{n-1}-17w_{n-2}-7w_{n-3}+41w_{n-4}+w_{n-5}-23w_{n-6} +3w_{n-7}+4w_{n-8}-w_{n-9}
		\end{equation}
		with initial values $1, 5, 28, 130, 569, 2352, 9363, 36183, 136663$ ($n=0,\ldots,8$).\\
		A composed form  of \eqref{eq:main_2xn} is 
		\begin{equation*}
			r_{n}=3r_{n-1}+r_{n-2}-r_{n-3}
		\end{equation*}
		with 
		\begin{equation*}
			\begin{array}{rcl}
				r_{n}&=& x_n-3x_{n-1}-x_{n-2}+x_{n-3},\\
				x_{n}&=& y_n-3y_{n-1}+y_{n-2},\\
				y_{n}&=& w_n+w_{n-1}.
			\end{array}
		\end{equation*}
		
	\end{theorem}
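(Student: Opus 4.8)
The plan is to count $w_n$ by a joint double-counting over tilings and walks and then to extract the recurrence from the resulting transfer operator. First I would reduce the geometry: a shortest crossing walk of the $(2\times n)$-board is a monotone lattice path from the lower-left to the upper-right vertex, and since the board has height $2$ such a path is completely determined by the two columns $a\le b$ at which it makes its two upward steps (it runs along $y=0$ up to $x=a$, along the middle line $y=1$ from $x=a$ to $x=b$, and along $y=2$ from $x=b$ to $x=n$). The requirement that the walk never cut a domino becomes three \emph{local} conditions on the tiling $T$: no horizontal domino straddles the line $x=a$ in the bottom row, none straddles $x=b$ in the top row, and no vertical domino sits in any of the columns $a,a+1,\dots,b-1$ (whose middle edges the path uses). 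Hence
\[
w_n=\sum_{T}\#\{(a,b):0\le a\le b\le n,\ (a,b)\text{ valid for }T\},
\]
which is the two-row analogue of the clean $(1\times n)$ count behind Theorem~\ref{th:main_1xn}.

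Next I would evaluate this sum with a column-by-column transfer matrix that sweeps the board from left to right and simultaneously records two pieces of data: the boundary state of the partial tiling (which of the two cells on the current column boundary are already occupied by a horizontal domino protruding from the left) and the \emph{phase} of the walk (whether it is still on the bottom line, on the middle run, or already on the top line). The up-steps and the domino-straddling prohibitions are encoded as the allowed transitions between these finitely many joint states. Since the state space is finite, $w_n$ is a fixed linear combination of entries of the $n$-th power of this operator, so $(w_n)$ satisfies a constant-coefficient linear recurrence whose characteristic polynomial divides that of the transfer matrix.

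The decisive step is then to identify this characteristic polynomial. I would show that, after discarding spurious factors, it equals
\[
(\lambda^3-3\lambda^2-\lambda+1)^2(\lambda^2-3\lambda+1)(\lambda+1),
\]
whose first factor is exactly the $(2\times n)$ tiling polynomial of McQuistan and Lichtman \cite{ML}. Expanding this degree-$9$ product reproduces the coefficients $8,-17,-7,41,1,-23,3,4,-1$ of \eqref{eq:main_2xn}. Read as backward-shift operators $S$ (with $(Su)_n=u_{n-1}$), the very same factorization \emph{is} the composed form: applying $(1+S)$ gives $y_n=w_n+w_{n-1}$, then $(1-3S+S^2)$ gives $x_n$, then $(1-3S-S^2+S^3)$ gives $r_n$, while the remaining factor asserts that $r_n$ is annihilated by the tiling operator, i.e. $r_n=3r_{n-1}+r_{n-2}-r_{n-3}$. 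Finally I would pin the recurrence down by computing the nine initial values $w_0,\dots,w_8$ directly by enumerating small boards.

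I expect the main obstacle to be the bookkeeping in the transfer step: fixing the joint (tiling-boundary)$\times$(walk-phase) state space and its transition rules exactly, and then proving that the minimal recurrence truly has order $9$ — that the operator contributes no extra irreducible factors and that the multiplicity-two tiling factor genuinely survives (the double root is what forces the $n$-weighted growth, in analogy with the squared Fibonacci polynomial $(\lambda^2-\lambda-1)^2$ governing $v_n$ in Theorem~\ref{th:main_1xn}). An alternative that sidesteps the matrix is to expand $w_n$ as a double convolution of tiling-type sub-counts indexed by the two up-step columns $a$ and $b$, where the square of the tiling polynomial appears automatically; there the delicate point is handling cleanly the coupling induced by horizontal dominoes across the cut lines $x=a$ and $x=b$.
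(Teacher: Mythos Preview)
Your plan is sound and would yield a correct proof, but it is packaged differently from the paper's argument. You parametrize a shortest walk by the pair $(a,b)$ of up-step columns and encode the compatibility conditions locally, then propose a single transfer operator whose states are (tiling boundary profile)$\times$(walk phase) and read off the recurrence from its characteristic polynomial. The paper instead builds an explicit coupled system of twelve recurrences by distinguishing four tiling ``types'' (the completed board and three specific unfinished shapes $A,C,D$) together with the three walk-endpoint levels; it then eliminates the auxiliary sequences by hand in several rounds and finishes with a $12\times 11$ linear system to force a single recurrence in $w_n=r_n^{(2)}$ alone. Both approaches track essentially the same information --- the paper's superscripts ${}^{(0)},{}^{(1)},{}^{(2)}$ are exactly your three walk phases, and its unfinished types play the role of your boundary profiles --- so the characteristic polynomial $(x+1)(x^2-3x+1)(x^3-3x^2-x+1)^2$ and the composed form fall out the same way. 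Your route is cleaner conceptually and makes the squared tiling factor more transparent; the paper's route is more explicit and gets the initial values for free along the way. The one place where your sketch is thinner than the paper is the assertion that the minimal polynomial is exactly degree $9$ with no spurious factors and with the tiling cubic genuinely squared: the paper effectively verifies this through its elimination, whereas you would still need to carry out the transfer-matrix computation (or the double-convolution alternative you mention) to confirm it.
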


	\begin{corollary}\label{th:maincor_2xn}
		The tiling-walking sequence $(w_n)_{n=0}^\infty$ of the $(2\times n)$-board with only dominoes is recursively given by the $6$-th order homogeneous linear recurrence relation $(n\geq 6)$
		\begin{equation}\label{eq:maincor_2xn}
			w_{n}=2w_{n-1}+2w_{n-2}-4w_{n-3}-2w_{n-4}+2w_{n-5}+w_{n-6}
		\end{equation}
		with initial values $1, 2, 6, 12, 26, 50$ for $n=0,\ldots,6$.  (See {A054454} in OEIS\cite{oeis}.)
		Moreover, for $ n\geq 0$, we obtain the equation
		\begin{eqnarray*}     
			w_{2n}= \frac{1+(-1)^n}{2}  + \frac{3}{5}(1+n)F_n + \frac{4n}{5} F_{n+1},
		\end{eqnarray*}
		and for even and odd terms, the equations   
		\begin{eqnarray*}
			5\,w_{2n} &=&  5+ (3+6n)\,F_{2n} + 8n\,F_{2n+1},\\
			5\,w_{2n+1} &=&  (6+6n)\, F_{2n+1} + (4+8n)\, F_{2n+2},
		\end{eqnarray*}
		where $F_{n}$ is the $n$th Fibonacci number. 
	\end{corollary}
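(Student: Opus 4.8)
The plan is to reduce the count to a single transfer-matrix computation and then read off both the recurrence and the closed forms from one rational generating function; the whole thing is the specialization of the model behind Theorem~\ref{th:main_2xn} obtained by discarding the square tiles.

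First I would fix the combinatorial model. Scanning columns left to right, every domino tiling of the $(2\times n)$-board decomposes uniquely into vertical dominoes (width-$1$ blocks $V$) and $2\times2$ blocks carrying two stacked horizontal dominoes (width-$2$ blocks $H$); this is the decomposition giving $F_{n+1}$ tilings. A shortest crossing walk is a monotone lattice path from $(0,0)$ to $(n,2)$, which I encode by its level sequence $m_1\le m_2\le\cdots\le m_n$ with $m_j\in\{0,1,2\}$, where $m_j$ is the height at which the path crosses column $j$. The point is the translation of ``the walk avoids every domino'': a vertical domino in column $i$ blocks the middle horizontal edge, forcing $m_i\ne1$, while an $H$-block over columns $(i,i+1)$ blocks its interior vertical line, forbidding an up-step there and hence forcing $m_i=m_{i+1}$. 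Consequently up-steps may occur only at block boundaries and at the two ends $x=0,n$.

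Next I would encode this with $3\times3$ matrices indexed by the level. Let $U$ be the up-step matrix with $U_{\ell'\ell}=1$ iff $\ell'\ge\ell$, let $D_V=\operatorname{diag}(1,0,1)$ be the filter of a vertical domino and $D_H=I$ that of an $H$-block, and weight $V$ by $x$, $H$ by $x^2$. Summing the product $U\,D_{B_r}U\cdots D_{B_1}U$ over all block sequences gives
\[
W(x)=\sum_{n\ge0}w_nx^n=e_2^{\top}\,U\,(I-S(x))^{-1}\,e_0,\qquad S(x)=(xD_V+x^2I)\,U.
\]
Since $I-S(x)$ is lower triangular it inverts by hand, and the computation collapses through the two identities $(1-x^2)+x^2=1$ and $(1-x-x^2)+x+x^2=1$ to
\[
W(x)=\frac{1}{(1-x-x^2)^2\,(1-x^2)}.
\]

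From here both claims are short. Expanding $(1-x-x^2)^2(1-x^2)=1-2x-2x^2+4x^3+2x^4-2x^5-x^6$ and using that $W(x)$ is its reciprocal yields exactly the sixth-order recurrence \eqref{eq:maincor_2xn}, and comparing the first terms of the series gives the initial values $w_0,\dots,w_5=1,2,6,12,26,50$. For the closed form I would check that the proposed right-hand side has generating function $W(x)$: the part $\tfrac{1+(-1)^n}{2}$ contributes $\tfrac{1}{1-x^2}$, and differentiating $\sum F_nx^n=\tfrac{x}{1-x-x^2}$ and $\sum F_{n+1}x^n=\tfrac{1}{1-x-x^2}$ produces the generating functions of $nF_n$ and $nF_{n+1}$, so that the Fibonacci part $\tfrac35(1+n)F_n+\tfrac45 nF_{n+1}$ sums to $\tfrac{x(2+x)}{(1-x-x^2)^2}$; adding the two pieces, the numerator collapses to $1$ and recovers $W(x)$. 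The even and odd formulas for $5w_{2n}$ and $5w_{2n+1}$ are then the immediate specializations $n=2m$ and $n=2m+1$.

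The main obstacle I anticipate lies entirely in the transfer-matrix bookkeeping: one must guarantee that up-steps are allowed at exactly the block boundaries and the two board ends but forbidden at the interior line of every $H$-block, and that each (tiling, walk) pair is produced once and only once by the expansion of $(I-S(x))^{-1}$. Once the matrices are correct the algebra is routine; the only other delicate point is fixing the rational coefficients $\tfrac35,\tfrac45$, which is safest done by the generating-function matching above — equivalently by a partial-fraction decomposition with a double pole at the reciprocals of the golden ratio and simple poles at $x=\pm1$ — rather than by guessing.
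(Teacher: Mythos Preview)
Your argument is correct and takes a genuinely different route from the paper. The paper specializes the system of recurrences built for Theorem~\ref{th:main_2xn}: it drops the sequences $(a_n^{(j)})$ and $(c_n^{(j)})$ that involve square tiles, obtains the reduced system
\[
r_n^{(2)}=r_{n-1}^{(2)}+r_{n-2}^{(2)}+r_{n-2}^{(1)}+r_n,\qquad
r_n^{(1)}=r_{n-2}^{(1)}+r_n,\qquad
r_n=r_{n-1}+r_{n-2},
\]
eliminates $r_n$ and $r_n^{(1)}$ by substitution to reach the sixth-order recurrence, then factors the characteristic polynomial $(x-1)(x+1)(x^2-x-1)^2$, fits the six constants in the explicit form against the initial values, and finally rewrites $\alpha^n\pm\beta^n$ in terms of $F_n,F_{n+1}$. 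You instead set up a $3\times3$ transfer matrix over the block decomposition $V/H$ of domino tilings, compute the generating function in closed form as $W(x)=1/\bigl((1-x-x^2)^2(1-x^2)\bigr)$, read the recurrence off the denominator, and certify the Fibonacci formula by matching generating functions. Your method is self-contained (it does not rely on the machinery of Theorem~\ref{th:main_2xn}) and makes the structure transparent: the factored denominator immediately explains both the order of the recurrence and the shape of the closed form, and the coefficients $\tfrac35,\tfrac45$ fall out of a single numerator identity rather than a $6\times6$ linear solve. The paper's approach, on the other hand, has the virtue of displaying the corollary as a literal specialization of the square-and-domino case, which is the narrative the paper is building. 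One small remark: your generating-function check actually validates the formula for $w_n$ (all $n$), which is what is meant; the displayed ``$w_{2n}$'' in the statement is a typographical slip for $w_n$, and your even/odd specializations then match the paper's $5w_{2n}$ and $5w_{2n+1}$.
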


	\section{Tiling and walking on \texorpdfstring{$(1\times n)$} --board}
	
	First, we show the walks  on the tiled  $(1\times 0)$-, $(1\times 1)$-, and $(1\times 2)$-boards in Figure~\ref{fig:Til_walk_1n_first}. Thus, $v_0=1$, $v_1=2$, and $v_2=5$.
	
	\begin{figure}[!ht]
		\centering
		\includegraphics[scale=0.9]{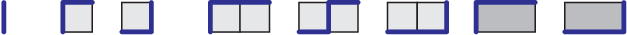} 
		\caption{All the walks on the tiled boards $1\times 0$, $1\times 1$, and $1\times 2$}
		\label{fig:Til_walk_1n_first}
	\end{figure}
	
	Following the well-known proof of tiling with squares and dominoes, we distinguish two types of tilings on $(1\times n)$-board. When the last tile is a domino and the number of walks on $(1\times (n-2))$-board is $v_{n-2}$, then we have to finish with two right steps or in case of the all $F_{n-1}$ different tilings, we step up at the end of the board. The left-hand side of Figure~\ref{fig:Til_walk_1n_fibo} shows these cases.  In these subfigures, the small circle illustrates the end of the walks on $(1\times (n-2))$-board.
	Similarly, (the right-hand side of Figure~\ref{fig:Til_walk_1n_fibo}), when the last tile is a square, then the number of possible walks is $v_{n-1}+F_n$. 
	Summarizing all the walks for $n\geq2$, we have 
	\begin{equation}\label{eq:1xn_1}
		v_{n}=v_{n-2}+F_{n-1}+v_{n-1}+F_{n}=v_{n-1}+v_{n-2}+F_{n+1}.
	\end{equation}
	
	\begin{figure}[!ht]
		\centering
		\includegraphics[scale=0.9]{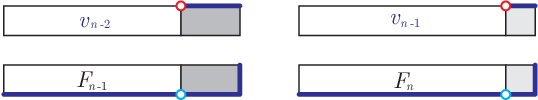} 
		\caption{Tiling and walks on $(1\times n)$-board with recurrence}
		\label{fig:Til_walk_1n_fibo}
	\end{figure}
	
	On the other hand for the proof of Theorem~\ref{th:main_1xn}, using exactly $k$ dominoes, the number of walks of all the $n$-tilings is the same, exactly $n-k+1$ (see Figure~\ref{fig:Til_walk_1n}).	
	
	\begin{figure}[!ht]
		\centering
		\includegraphics[scale=0.9]{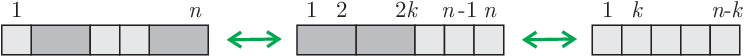} 
		\caption{Tiling with exactly $k$ dominoes on $(1\times n)$-board}
		\label{fig:Til_walk_1n}
	\end{figure}
	
	\begin{lemma}[Benjamin and Quinn \cite{BQ}]
		The number of $n$-tilings using exactly $k$ dominoes is 
		\[\binom{n-k}{k}, \qquad (k=0,1,\ldots,\left\lfloor n/2\right\rfloor),\]
		where $\lfloor n\rfloor$ is the integer part of $n$.
	\end{lemma}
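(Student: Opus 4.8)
The plan is to prove this by a direct counting argument based on viewing a tiling as a linear sequence of tiles. First I would observe that in any $n$-tiling using exactly $k$ dominoes, the dominoes cover $2k$ cells, so the remaining $n - 2k$ cells must each be covered by a single square. This forces the number of squares to equal $n - 2k$, and it also yields the admissibility constraint $n - 2k \geq 0$, i.e.\ $k \leq \lfloor n/2 \rfloor$, which matches the stated range of $k$.

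Next I would count the total number of tiles: a tiling with $k$ dominoes and $n - 2k$ squares uses $k + (n - 2k) = n - k$ tiles altogether. Reading the board from left to right, the tiling is completely determined by the order in which these $n - k$ tiles appear, that is, by a word of length $n - k$ over a two-letter alphabet (one letter for a square, one for a domino) containing exactly $k$ domino-letters. Conversely, every such word reassembles into a valid $n$-tiling, since laying the tiles down left to right in the prescribed order covers exactly $2k + (n - 2k) = n$ cells with no gaps or overlaps. This sets up a bijection between the $n$-tilings with exactly $k$ dominoes and the set of length-$(n-k)$ words having exactly $k$ domino-letters.

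Finally, the number of such words equals the number of ways to choose which $k$ of the $n - k$ positions are occupied by dominoes, namely $\binom{n-k}{k}$, which completes the count. The only point worth checking carefully is that the correspondence between tile-sequences and tilings is genuinely a bijection — no two distinct sequences produce the same tiling, and every tiling arises from exactly one sequence — but this is immediate from reading the covering left to right. I expect no real obstacle here: the statement is a clean instance of counting multiset permutations (equivalently, placements of $k$ indistinguishable dominoes among $n-k$ tile-slots), and the essential content is just the length bookkeeping $2k + (n-2k) = n$ together with the resulting restriction on $k$.
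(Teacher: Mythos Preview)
Your argument is correct and is essentially the standard bijective proof: a tiling with $k$ dominoes and $n-2k$ squares is a left-to-right sequence of $n-k$ tiles, and choosing which $k$ of those $n-k$ positions are dominoes gives $\binom{n-k}{k}$. The paper does not supply its own proof of this lemma; it simply cites the result from Benjamin and Quinn~\cite{BQ}, so there is nothing to compare against beyond noting that your proof is exactly the classical one found there.
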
	
	Thus, because $k=0,1,\ldots,\lfloor n/2\rfloor$, we obtain that $v_n=\sum_{k=0}^{\lfloor n/2\rfloor}\binom{n-k}{k}(n-k+1)$. Finally, using identity $\sum_{k=0}^{\lfloor n/2\rfloor}\binom{n-k}{k}=F_{n+1}$ \cite[p.~155, Eq.~12.1]{Koshy}, after the following calculations
	\begin{eqnarray*}
		v_n&=&\sum_{k=0}^{\left\lfloor n/2\right\rfloor}\binom{n-k}{k}(n-k+1)=  
		(n+1)\sum_{k=0}^{\left\lfloor n/2\right\rfloor}\binom{n-k}{k}-\sum_{k=0}^{\lfloor n/2\rfloor}k\binom{n-k}{k}\\
		&=&(n+1)F_{n+1}- \sum_{k=1}^{\lfloor n/2\rfloor}k\binom{n-k}{k} =
		(n+1)F_{n+1}- \sum_{k=0}^{\lfloor (n-2)/2\rfloor}(k+1)\binom{n-(k+1)}{k+1}  \\
		&=&(n+1)F_{n+1}- \sum_{k=0}^{\lfloor (n-2)/2\rfloor}(k+1)\frac{((n-2)-k+1)((n-2)-k)!}{(k+1)k!((n-2)-2k)!}\\	
		&=&(n+1)F_{n+1}- \sum_{k=0}^{\lfloor (n-2)/2\rfloor}\binom{n-2-k}{k}(n-2-k+1) \\
		&=& (n+1)F_{n+1}-v_{n-2},
	\end{eqnarray*}
	our new relation is 
	\begin{equation}\label{eq:1xn_2}
		v_n+v_{n-2}=(n+1)F_{n+1}.
	\end{equation}
	
	Eliminating the Fibonacci numbers from \eqref{eq:1xn_1} and \eqref{eq:1xn_2} we have $v_n+v_{n-2}=(n+1)(v_{n}-v_{n-1}-v_{n-2})$, and it proves Theorem~\ref{th:main_1xn}.
	
	\begin{proof}[Proof of Corollary \ref{cor:main_1xn}]
		We find the recurrence relation of the sequence $v_n$ in form 
		\begin{equation}\label{eq:ABCD}
			v_{n}=Av_{n-1}+Bv_{n-2}+Cv_{n-3}+Dv_{n-4}, \quad n\geq4,
		\end{equation}
		where $A, B, C$, and $D$ are constant real numbers. Now, we express $v_n$ and $v_{n-4}$ from recurrence relations \eqref{eq:main_1xn} and its shifted version $ (n-2)v_{n-2}=(n-1)v_{n-3}+nv_{n-4}$ for $n\geq 4$, respectively, and substitute them into \eqref{eq:ABCD}. We then have
		\begin{equation*} 
			\def\arraystretch{1.3}
			\begin{array}{rcl}
				\frac{n+1}{n}v_{n-1}+\frac{n+2}{n}v_{n-2}&=&Av_{n-1}+Bv_{n-2}+Cv_{n-3} +D(\frac{n-2}{n}v_{n-2}-\frac{n-1}{n}v_{n-3}),\\
				\frac{(1-A)n+1}{n}v_{n-1}&=&(-\frac{n+2}{n}+\frac{Bn}{n}+\frac{Dn-2D}{n})v_{n-2}+(\frac{Cn}{n}-\frac{Dn-D}{n})v_{n-3},\\
				((1-A)n+1)v_{n-1}&=&((B+D-1)n-2(D+1))v_{n-2}+((C-D)n+D)v_{n-3},\\
				((A-1)n-1)v_{n-1}&=&((1-B-D)n+2(D+1))v_{n-2}+((D-C)n-D)v_{n-3}.
			\end{array} 
		\end{equation*}
		Since from \eqref{eq:main_1xn}  we also have
		\[(n-1)v_{n-1}=nv_{n-2}+(n+1)v_{n-3},\]
		then from the equalities of the coefficients we obtain the system of linear sequences
		\begin{equation*} 
			\def\arraystretch{1.3}
			\begin{array}{rcl}
				n-1&=&(A-1)n-1,\\
				n&=&(1-B-D)n+2(D+1),\\
				n+1&=&(D-C)n-D,
			\end{array} 
		\end{equation*}	
		where the solutions are $A=2, B=1, C=-2$, and $D=-1$.        
		Thus, for  $n\geq4$ the recurrence \eqref{eq:ABCD} holds. By a similar calculation, we can verify that there is no third-order recursion for $v_n$.
		
		For the proof of the second part of the corollary, first, we sum the equations \eqref{eq:1xn_1} and \eqref{eq:1xn_2}, then subtract the shifted versions (in $n$ by one) of  \eqref{eq:1xn_1} and \eqref{eq:1xn_2}. Thus  we obtain, respectively, 
		\begin{equation*} 
			\def\arraystretch{1.3}
			\begin{array}{rcl}
				2v_{n}-v_{n-1}&=&(n+2)F_{n+1},\\
				v_{n}+2v_{n-1}&=&(n+1)F_{n+2}.
			\end{array} 
		\end{equation*}	
		Finally, the sum of twice of the first equation and the second one gives the statement.		
	\end{proof}

	\section{Tiling and walking on \texorpdfstring{$(2\times n)$} --board}
	
	In this section, we prove the Theorem \ref{th:main_2xn}. Even though the number of $n$-tilings on $(2\times n)$-board is known (i.e., \cite{ML}), first, we will give a proof for it. This is because we will use later the method and the recurrence sequences which are defined during the proof. Second, we give recurrence sequences, which together give the number of walks. Finally, we solve the derived system of the recurrence sequences to obtain the final recurrence form \eqref{th:main_2xn} by using linear algebra.

	Let $r_n$ be the number of tilings with squares and dominoes on $(2\times n)$-board. We  distinguish three different types of not finished tilings on $(2\times n)$-board. According to the first column of Figure~\ref{fig:Til_walk_tiling_2n}, 
	type $A$ is when the last lower tile is a domino and the last upper tile, the square, is missing; 
	type $C$ is when the last lower square is missing;
	type $D$ is when the last upper tile is a domino and the last lower tile, a horizontal domino, which could be two squares, is missing.
	Let $a_n$, $c_n$, and $d_n$ be the numbers of their elements, respectively. If we put tiles into the missing parts, then we can build our tilings recursively. For example, in the third row, we can get tilings of type $C$ putting squares and dominoes from full-, type $A$ or type $D$ tilings. 
	
	\begin{figure}[!ht]
		\centering
		\includegraphics[scale=0.9]{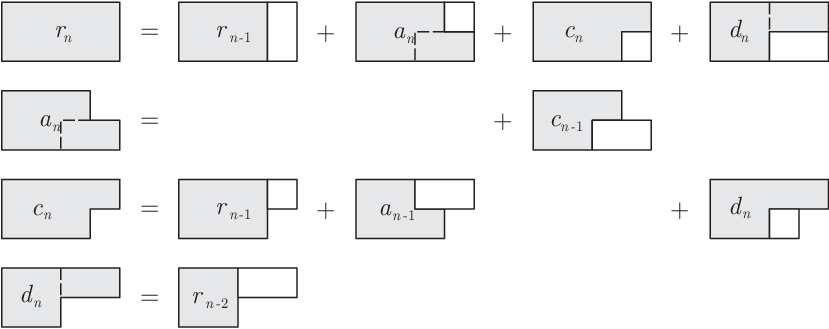} 
		\caption{Tiling with squares and dominoes on $(2\times n)$-board}
		\label{fig:Til_walk_tiling_2n}
	\end{figure}
	Figure~\ref{fig:Til_walk_tiling_2n} shows how we can obtain the tilings recursively, and it implies the system of recurrence equations \eqref{eq:sys_rabcd} for $n\geq2$,      
	\begin{equation}\label{eq:sys_rabcd}
		\begin{array}{rcl}
			r_n&=&r_{n-1}+a_n+c_n+d_n,\\
			a_n&=&c_{n-1},\\
			c_n&=&r_{n-1}+a_{n-1}+d_n,\\
			d_n&=&r_{n-2},
		\end{array} 
	\end{equation}
	where the initial values are $a_0=c_0=d_0=a_1=d_1=0$, $r_0=1$, $r_1=2$, and $c_1=1$ (see Figure~\ref{fig:Til_walk_tiling_init}).
	
	\begin{figure}[!ht]
		\centering
		\includegraphics[scale=0.9]{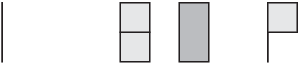} 
		\caption{Initial terms of tiling with squares and dominoes}
		\label{fig:Til_walk_tiling_init}
	\end{figure}
	
	To solve the system \eqref{eq:sys_rabcd}, we eliminate $a_n$ and $d_n$. Then we have  
	\begin{equation}\label{eq:sys_rc}
		\begin{array}{rcl}
			r_n&=&r_{n-1}+r_{n-2}+c_n+c_{n-1},\\
			c_n&=&r_{n-1}+r_{n-2}+c_{n-2}.
		\end{array}
	\end{equation}
	The sum of the equations from \eqref{eq:sys_rc} is
	\[ 	r_{n}=2r_{n-1}+2r_{n-2}+c_{n-1}+c_{n-2}. \]
	Combining its shifted version 
	\[ 	r_{n+1}=2r_{n}+2r_{n-1}+c_{n}+c_{n-1} \]
	with the first equation of \eqref{eq:sys_rc}  we have
	\[	r_{n+1}-r_n = 2r_{n}+2r_{n-1}+c_{n}+c_{n-1} -(r_{n-1}+r_{n-2}+c_n+c_{n-1})=2r_n+r_{n-1}-r_{n-2}. \]
	Thus, our recurrence relation for the sequence $(r_n)$ is 
	\begin{equation}\label{eq:r_recu}
		r_{n}= 3r_{n-1}+r_{n-2}-r_{n-3} \quad (n\geq 3),
	\end{equation}
	with initial values $r_0=1$, $r_1=2$, and $r_2=7$. It appears in OEIS \cite{oeis} as sequence {A030186}.
	
	By the help of the system of equations \eqref{eq:sys_rabcd} we fill Table~\ref{tab:til01}, and we find a simple relationship between sequences $(r_n)$ and $(c_n)$ as given in the following lemma. 
	\begin{lemma}\label{lem:rcc}
		For $n\geq0$, 
		\begin{equation}\label{eq:rcc}
			r_n=c_{n+1}-c_n.
		\end{equation}
	\end{lemma}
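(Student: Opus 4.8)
The plan is to prove the identity \eqref{eq:rcc} by a short induction on $n$, relying only on the reduced system \eqref{eq:sys_rc} that has already been derived. The guiding observation is that the claim is, up to a shift of index, exactly the algebraic fact that makes the inductive step collapse: writing the second line of \eqref{eq:sys_rc} at index $n+1$ gives $c_{n+1}=r_n+r_{n-1}+c_{n-1}$, so that $c_{n+1}-c_n = r_n + \bigl(r_{n-1}-(c_n-c_{n-1})\bigr)$, and the bracketed term vanishes precisely when the claim is assumed to hold one step earlier. Thus the whole proof reduces to one cancellation, and the only real bookkeeping is to get the base case and the index ranges right.

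First I would settle the base case. From the initial data $r_0=1$, $c_0=0$, $c_1=1$ one reads off $c_1-c_0=1=r_0$, so \eqref{eq:rcc} holds for $n=0$. As a consistency check against Table~\ref{tab:til01}, the $c$-recurrence gives $c_2=r_1+r_0+c_0=3$, whence $c_2-c_1=2=r_1$, matching the $n=1$ instance as well.

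Next I would carry out the inductive step. Fix $n\ge 1$ and assume $r_{n-1}=c_n-c_{n-1}$. Since $n+1\ge 2$, the second equation of \eqref{eq:sys_rc} is valid at index $n+1$ and reads $c_{n+1}=r_n+r_{n-1}+c_{n-1}$. Substituting this into $c_{n+1}-c_n$ and then replacing $c_n-c_{n-1}$ by $r_{n-1}$ via the inductive hypothesis, the two copies of $r_{n-1}$ cancel and leave $c_{n+1}-c_n=r_n$. This is the assertion for $n$, so the induction closes and \eqref{eq:rcc} holds for all $n\ge 0$.

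I do not anticipate a genuine obstacle in this lemma: once \eqref{eq:sys_rc} is in hand the argument is essentially a single line. The two points that do require care are that the $c$-recurrence is only available for indices $\ge 2$, so the base value must be taken directly from the initial data (or from Table~\ref{tab:til01}) rather than read off the recurrence, and that the index shift in passing from $c_{n}=r_{n-1}+r_{n-2}+c_{n-2}$ to $c_{n+1}=r_n+r_{n-1}+c_{n-1}$ must be tracked consistently. An alternative, slightly heavier route would be to note that both $r_n$ and the difference sequence $c_{n+1}-c_n$ are solutions of the same linear recurrence \eqref{eq:r_recu} and then match three initial values; I would prefer the direct induction above since it is shorter and uses nothing beyond the already-established system.
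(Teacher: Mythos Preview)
Your proof is correct and follows essentially the same inductive approach as the paper: both use the second equation of \eqref{eq:sys_rc} shifted to index $n+1$ and invoke the induction hypothesis to cancel the leftover terms. The only cosmetic difference is that the paper applies the $c$-recurrence at both indices $n+1$ and $n$ and then uses the hypothesis at $n-2$, whereas you apply it once at $n+1$ and use the hypothesis at $n-1$; your version is, if anything, slightly cleaner.
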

	\begin{proof} The proof is by induction on $n$. For $n=0$ and $n=1$ the statement holds, see Table~\ref{tab:til01}. Let us  suppose that the result is true for all $i\leq n-1$. Then in particular, the result for $i=n-2$ says that  $c_{n-1}-c_{n-2}=r_{n-2}$, and from the second equation of system \eqref{eq:sys_rc} we have 
		$c_{n+1}-c_{n}=r_{n}+r_{n-1}+c_{n-1}- (r_{n-1}+r_{n-2}+c_{n-2}) = r_{n} + (c_{n-1}-c_{n-2}-r_{n-2}) =r_{n}.$
	\end{proof}
	\begin{table}[!ht]		\centering
		\ra{1.3}
		\begin{tabular}{@{}cccccccc@{}}\toprule
			$n$     & 0 & 1 & 2 & 3 & 4 & 5 & 6\\ \midrule
			$r_n$ 	& 1 & 2 & 7 & 22 & 71 & 228 & 733 \\
			$a_n$ 	& 0 & 0 & 1 & 3 & 10 & 32 & 103   \\
			$c_n$ 	& 0 & 1 & 3 & 10 & 32 & 103 & 331 \\
			$d_n$ 	& 0 & 0 & 1 & 2 & 7 & 22 & 71  \\
			\bottomrule
		\end{tabular}
		\caption{First few items of sequences $(r_n)$, $(a_n)$, $(c_n)$, and $(d_n)$} 
		\label{tab:til01}
	\end{table}

	We prove that the same recurrence relation holds for $(c_n)$, as for $(r_n)$. 
	\begin{lemma}\label{lem:c_recu}
		For $n\geq 3$, 
		\begin{equation}\label{eq:c_recu}
			c_{n}= 3c_{n-1}+c_{n-2}-c_{n-3}.
		\end{equation}
	\end{lemma}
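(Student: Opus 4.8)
The plan is to leverage the two facts already in hand: the third-order recurrence \eqref{eq:r_recu} for $(r_n)$ and the telescoping relation $r_n=c_{n+1}-c_n$ furnished by Lemma~\ref{lem:rcc}. Since $(c_n)$ is tied to $(r_n)$ by a first difference and $(r_n)$ satisfies a linear recurrence with characteristic polynomial $x^3-3x^2-x+1$, one expects $(c_n)$ to obey the \emph{same} recurrence, provided no spurious constant mode (the factor $x-1$ coming from the summation/difference relation) is present. Ruling out that mode is exactly what a single base-case check will accomplish.

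Concretely, I would substitute $r_k=c_{k+1}-c_k$ into \eqref{eq:r_recu} for each of the four terms $r_n,r_{n-1},r_{n-2},r_{n-3}$. The vanishing of $r_n-3r_{n-1}-r_{n-2}+r_{n-3}$ then becomes a relation purely among the $c$'s,
\[
c_{n+1}-4c_n+2c_{n-1}+2c_{n-2}-c_{n-3}=0,\qquad n\geq 3,
\]
where validity for $n\geq 3$ follows because \eqref{eq:r_recu} holds for $n\geq 3$ and Lemma~\ref{lem:rcc} holds for all $k\geq 0$.

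The key observation is that this expression is precisely the first difference $s_{n+1}-s_n$ of the quantity $s_n:=c_n-3c_{n-1}-c_{n-2}+c_{n-3}$, whose vanishing is exactly the assertion \eqref{eq:c_recu}. Hence $s_{n+1}=s_n$ for every $n\geq 3$, so $(s_n)$ is constant from $n=3$ onward. It then remains only to evaluate one initial value, $s_3=c_3-3c_2-c_1+c_0=10-9-1+0=0$, using the entries of Table~\ref{tab:til01}, to conclude $s_n\equiv 0$ and therefore \eqref{eq:c_recu}.

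The main obstacle is that the naive substitution produces a fourth-order relation rather than the desired third-order one; the resolution is to recognize that this fourth-order relation telescopes as the difference of two consecutive copies of the target third-order expression, so that a single base-case verification collapses it to the claim. If one prefers an explicitly inductive formulation, an equivalent route is to solve the substituted relation for $c_{n+1}$ and eliminate $c_{n-3}$ via the inductive hypothesis $c_n=3c_{n-1}+c_{n-2}-c_{n-3}$, which recovers $c_{n+1}=3c_n+c_{n-1}-c_{n-2}$ directly; both arguments rest on the same cancellation.
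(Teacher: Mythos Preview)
Your argument is correct, but the paper takes a shorter and somewhat different route. Rather than passing through the already-derived recurrence \eqref{eq:r_recu} for $(r_n)$, the paper goes back to the system \eqref{eq:sys_rc} and subtracts its two equations to obtain directly $r_n=2c_n+c_{n-1}-c_{n-2}$; equating this with $r_n=c_{n+1}-c_n$ from Lemma~\ref{lem:rcc} gives $c_{n+1}-3c_n-c_{n-1}+c_{n-2}=0$ in one line, with no fourth-order intermediate relation and no base-case check. Your approach is more generic---it is the standard mechanism by which a sequence related by a first difference inherits the same recurrence up to the extra factor $x-1$, and it would work even without access to the original system---but the price is the telescoping step plus the need to evaluate $s_3$ from Table~\ref{tab:til01}. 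The paper's argument exploits the specific structure of \eqref{eq:sys_rc} to avoid both.
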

	\begin{proof}  Taking the difference between the equations of \eqref{eq:sys_rc} we have $r_n=2c_n+c_{n-1}-c_{n-2}$, and with \eqref{eq:rcc} we obtain 
		$ 0 = r_n-r_n= (c_{n+1}- c_{n})- (2c_{n}+c_{n-1}-c_{n-2})=c_{n+1}- 3c_{n}-c_{n-1}+c_{n-2}. $
	\end{proof}

	From now, we are able to determine the number of walks. We use the previously introduced types of tilings according to Figure~\ref{fig:Til_walk_tiling_2n}. They are the $r_n$ finished tilings on $(2\times n)$-board and the types $A$, $C$, and $D$ with number $a_n$, $c_n$, and $d_n$, respectively. We consider the walks on these tilings. The $(2\times n)$-board has three horizontal grid lines, the zero-, first-, and second line. When a walk ends at on these lines we denote it, respectively, by uppercases $^{(0)}$, $^{(1)}$, and $^{(2)}$. Moreover, in the figures we denoted the end points by small cyan, red, and blue circles. 
	Let $r_n^{(0)}$, $r_n^{(1)}$, $r_n^{(2)}$ be all the numbers of walks on  $(2\times n)$-board ending on grid lines 0, 1 and 2, respectively. For example, Figure~\ref{fig:Til_walk_ex_2x2} presents all the walks on the $(2\times 2)$-board in case of a given tiling, where the walks end on different horizontal grid lines.  Furthermore,  one can easily calculate as an exercise that $r_2^{(0)}=r_2=7$, $r_2^{(1)}=14$, $r_2^{(2)}=28$ with drawing all the tilings and their walks on $(2\times 2)$-board.  The equation $r_n^{(0)}=r_n$ holds for all $n\geq0$, and  $r_n^{(2)}$ gives the numbers of our final examined walks, thus, $w_n=r_n^{(2)}$.
	
	\begin{figure}[!ht]
		\centering
		\includegraphics[scale=0.9]{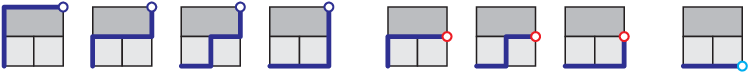} 
		\caption{Walks on $(2\times 2)$-board in case of a given tiling}
		\label{fig:Til_walk_ex_2x2}
	\end{figure}
	
	Additionally, let $a_n^{(0)}$, $a_n^{(1)}$, $a_n^{(2)}$, $c_n^{(0)}$, $c_n^{(1)}$, $c_n^{(2)}$, $d_n^{(0)}$, $d_n^{(1)}$, and $d_n^{(2)}$ be all the numbers of walks ending on grid lines 0, 1, and 2 on unfinished board types $A$, $C$, and $D$, respectively. 	Immediately, we observe that not only $r_n^{(0)}=r_n$  but also $a_n^{(0)}=a_n$, $c_n^{(0)}=c_n$, and $d_n^{(0)}=d_n$,    hold for all $n\geq0$. 
	
	Figure~\ref{fig:Til_walk_2n_init} shows all the possible finished and unfinished tilings and its walks include the cases with superscripts $^{(1)}$ and $^{(2)}$ for $n=0$ and $n=1$, 
	and Figure~\ref{fig:Til_walk_tiling_walks}, based on Figure~\ref{fig:Til_walk_tiling_2n}, displays recursively the evolution of the numbers of walks in case of the different types of boards and grid lines.
	
	\begin{figure}[!ht]
		\centering
		\includegraphics[scale=0.9]{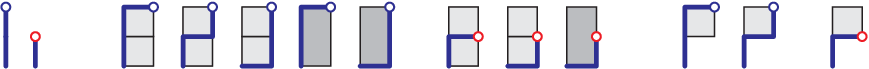} 
		\caption{Initial walks for Figure \ref{fig:Til_walk_tiling_walks}}
		\label{fig:Til_walk_2n_init}
	\end{figure}		
	
	\begin{figure}
		\centering
		\includegraphics[scale=0.9]{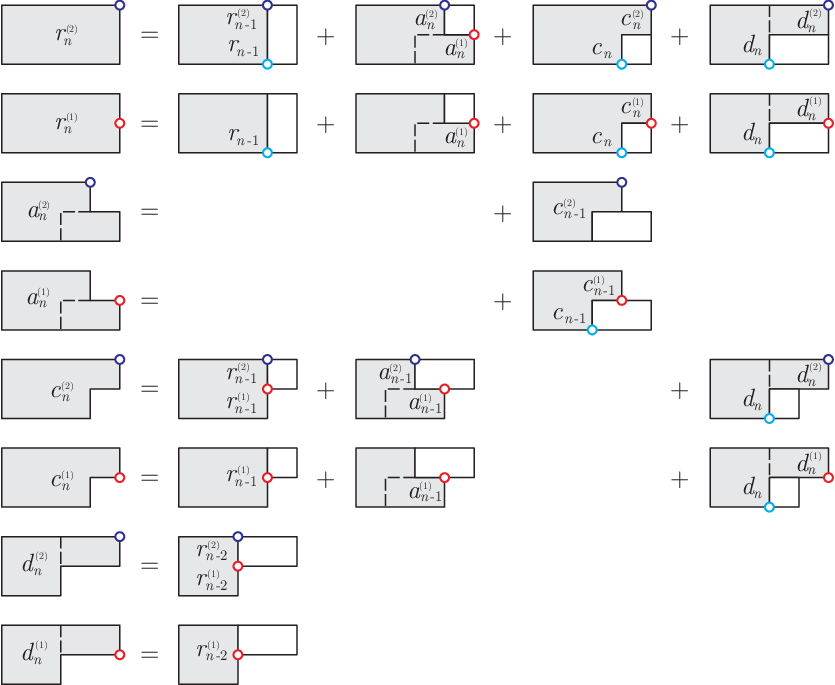} 
		\caption{Recursions of numbers of walks on tiled boards}
		\label{fig:Til_walk_tiling_walks}
	\end{figure}

	Finally, Figure~\ref{fig:Til_walk_tiling_walks} implies the following system of twelve recurrence equations \eqref{eq:sys_r-d} together with recurrences \eqref{eq:sys_rabcd} for $n\geq2$.
	
	\begin{equation}\label{eq:sys_r-d} 
		\def\arraystretch{1.3}
		\begin{array}{rcl}
			r_n^{(2)}&=&(r_{n-1}^{(2)}+r_{n-1})+(a_{n}^{(2)}+a_{n}^{(1)}) +(c_{n}^{(2)}+c_{n})+(d_{n}^{(2)}+d_{n}),\\
			r_n^{(1)}&=&r_{n-1}+a_{n}^{(1)}+(c_{n}^{(1)}+c_{n})+(d_{n}^{(1)}+d_{n}),\\
			a_n^{(2)}&=&c_{n-1}^{(2)},\\
			a_n^{(1)}&=&c_{n-1}^{(1)}+c_{n-1},\\
			c_n^{(2)}&=&(r_{n-1}^{(2)}+r_{n-1}^{(1)})+(a_{n-1}^{(2)}+a_{n-1}^{(1)}) +(d_{n}^{(2)}+d_{n}), \\	
			c_n^{(1)}&=& r_{n-1}^{(1)}+a_{n-1}^{(1)}+(d_{n}^{(1)}+d_{n}),\\
			d_n^{(2)}&=&r_{n-2}^{(2)}+r_{n-2}^{(1)},\\
			d_n^{(1)}&=&r_{n-2}^{(1)}
		\end{array} 
	\end{equation}
	with initial values in Table~\ref{tab:til01}, Table~\ref{tab:til_nnitial}, and Figure~\ref{fig:Til_walk_2n_init}.
	\begin{table}[!ht]\centering
		\ra{1.3}
		\begin{tabular}{@{}cccccccccc@{}}\toprule
			$n$  &   & $r_n^{(2)}$ & $r_n^{(1)}$ & $a_n^{(2)}$  & $a_n^{(1)}$ & $c_n^{(2)}$  & 	$c_n^{(1)}$ & $d_n^{(2)}$ &$d_n^{(1)}$\\ \midrule
			0 & & 1  & 1  & 0 & 0 & 0  & 0 & 0 & 0 \\
			1 & & 5  & 3  & 0 & 0 & 2  & 1 & 0 & 0 \\
			2 & & 28 & 14 & 2 & 2 & 11 & 5 & 2 & 1 \\
			\bottomrule
		\end{tabular}
		\caption{First few terms of sequences  \eqref{eq:sys_r-d} } 
		\label{tab:til_nnitial}
	\end{table}
	
	Recall since $w_n=r_n^{(2)}$, then the final purpose is to give recursively the sequences $r_n^{(2)}$ (if it is possible). Hence, we solve the system \eqref{eq:sys_r-d}.  Firstly, we eliminate sequences $(a_n^{(j)})$ and $(d_n^{(j)})$, $j\in\{1,2\}$. The system becomes
	\begin{eqnarray*}
		r_n^{(2)}&=&r_{n-1}^{(2)}+r_{n-2}^{(2)}+r_{n-2}^{(1)} 
		+c_{n}^{(2)}+c_{n-1}^{(2)}+c_{n-1}^{(1)} +r_{n-1}+r_{n-2} +c_{n}+c_{n-1},\\
		r_n^{(1)}&=& r_{n-2}^{(1)} + c_{n}^{(1)}+ c_{n-1}^{(1)} + r_{n-1}+r_{n-2} + c_{n}+ c_{n-1},\\
		c_n^{(2)}&=&r_{n-1}^{(2)}+r_{n-2}^{(2)} +r_{n-1}^{(1)}+r_{n-2}^{(1)}   +c_{n-2}^{(2)}+c_{n-2}^{(1)} +r_{n-2} +c_{n-2},\\	
		c_n^{(1)}&=& r_{n-1}^{(1)}+r_{n-2}^{(1)}+c_{n-2}^{(1)} +r_{n-2}+c_{n-2}.
	\end{eqnarray*}
	After using  the first equation of \eqref{eq:sys_rc}, we have  
	\begin{equation}\label{eq:sys_4eq}\def\arraystretch{1.3}
		\begin{array}{rcl}
			r_n^{(2)}&=&r_{n-1}^{(2)}+r_{n-2}^{(2)}+r_{n-2}^{(1)} 
			+c_{n}^{(2)}+c_{n-1}^{(2)}+c_{n-1}^{(1)} +r_{n},\\
			r_n^{(1)}&=& r_{n-2}^{(1)} + c_{n}^{(1)}+ c_{n-1}^{(1)} + r_{n},\\
			c_n^{(2)}&=&r_{n-1}^{(2)}+r_{n-2}^{(2)} +r_{n-1}^{(1)}+r_{n-2}^{(1)}   +c_{n-2}^{(2)}+c_{n-2}^{(1)} +r_{n-2} +c_{n-2},\\	
			c_n^{(1)}&=& r_{n-1}^{(1)}+r_{n-2}^{(1)}+c_{n-2}^{(1)} +r_{n-2}+c_{n-2}.
		\end{array}
	\end{equation}	
	The differences of the first two and the last two equations with simplification are  
	\begin{equation}\label{eq:sys_a0}\def\arraystretch{1.3}
		\begin{array}{rcl}
			r_n^{(2)}-r_n^{(1)}&=&r_{n-1}^{(2)}+r_{n-2}^{(2)} 
			+c_{n}^{(2)}+c_{n-1}^{(2)}- c_{n}^{(1)},\\
			c_n^{(2)}- c_n^{(1)}&=&r_{n-1}^{(2)}+r_{n-2}^{(2)}  +c_{n-2}^{(2)}.	
		\end{array}
	\end{equation}
	From the second equation of \eqref{eq:sys_a0}, we substitute $c_n^{(1)}$ into the first one, and after rearranging we obtain 
	\begin{equation}\label{eq:sys_a1}\def\arraystretch{1.3}
		\begin{array}{rcl}
			r_n^{(1)}&=&r_n^{(2)}-2r_{n-1}^{(2)}-2r_{n-2}^{(2)} - c_{n-1}^{(2)}- c_{n-2}^{(2)},  \\
			c_n^{(1)}&=& c_n^{(2)} -r_{n-1}^{(2)} -r_{n-2}^{(2)}   -c_{n-2}^{(2)}.
		\end{array}
	\end{equation}
	Now, we manage to eliminate the sequences $(r_n^{(1)})$ and $(c_n^{(1)})$ from \eqref{eq:sys_4eq}. We shift the equations of \eqref{eq:sys_a1} by $-1$ and $-2$ in the subscript. For example, $c_{n-1}^{(1)}= c_{n-1}^{(2)} -r_{n-2}^{(2)} -r_{n-3}^{(2)}   -c_{n-3}^{(2)}$, we substitute them into the first and third rows of \eqref{eq:sys_4eq}. Hence,
	\begin{equation}\label{eq:sys_a2}\def\arraystretch{1.3}
		\begin{array}{rclll}
			r_n^{(2)}&=&r_{n-1}^{(2)}+r_{n-2}^{(2)}-3r_{n-3}^{(2)}-2r_{n-4}^{(2)} 
			& +c_{n}^{(2)}	+2c_{n-1}^{(2)}  - 2c_{n-3}^{(2)} -c_{n-4}^{(2)} & +r_{n},\\
			c_n^{(2)}&=&2r_{n-1}^{(2)}-5r_{n-3}^{(2)}-3r_{n-4}^{(2)} 
			&+c_{n-2}^{(2)}		- 2c_{n-3}^{(2)}  -2c_{n-4}^{(2)} &+r_{n-2} +c_{n-2}.
		\end{array}
	\end{equation}
	Recall the equations of \eqref{eq:sys_rc} hold. Using Lemmas \ref{lem:rcc} and \ref{lem:c_recu}, our reduced system of recurrence equations is  
	\begin{equation}\label{eq:sys_3eq}\def\arraystretch{1.3}
		\begin{array}{rclll}
			r_n^{(2)}&=&r_{n-1}^{(2)}+r_{n-2}^{(2)}-3r_{n-3}^{(2)}-2r_{n-4}^{(2)} 
			& +c_{n}^{(2)}	+2c_{n-1}^{(2)}  - 2c_{n-3}^{(2)} -c_{n-4}^{(2)} & +c_{n+1}-c_n,\\
			c_n^{(2)}&=&2r_{n-1}^{(2)}-5r_{n-3}^{(2)}-3r_{n-4}^{(2)} 
			&+c_{n-2}^{(2)}		- 2c_{n-3}^{(2)}  -2c_{n-4}^{(2)} &+c_{n-1},\\ 
			c_n&=& &3c_{n-1}+c_{n-2}-c_{n-3}.
		\end{array}
	\end{equation}

	In the following part, we eliminate the sequence $(c_n)$. Shift the second equation of \eqref{eq:sys_3eq} by $1$ and $2$ in  the subscript, and multiply by $-1$ the second shifted one, producing equations of the form. Then $c_{n+1}^{(2)}=\cdots +c_n$ and $-c_{n+2}^{(2)}=\cdots -c_{n+1}$. Summing them with the first equation of \eqref{eq:sys_3eq}, and after shortening we gain   
	\begin{equation*}
		2r_{n+1}^{(2)}-r_n^{(2)}-6r_{n-1}^{(2)}	+ r_{n-2}^{(2)} +6r_{n-3}^{(2)} +2r_{n-4}^{(2)}= 
		c_{n+2}^{(2)} -c_{n+1}^{(2)} +5c_{n-1}^{(2)}	-4c_{n-3}^{(2)}-c_{n-4}^{(2)}.
	\end{equation*}
	Let its shifted version be
	\begin{equation}\label{eq:sys_As}
		L_{n}^{A} = R_{n+1}^{A},
	\end{equation}
	where
	\begin{eqnarray*}\label{eq:sys_A}
		L_{n}^{A} &=& 2r_{n}^{(2)}-r_{n-1}^{(2)}-6r_{n-2}^{(2)}	+ r_{n-3}^{(2)} +6r_{n-4}^{(2)} +2r_{n-5}^{(2)}\\
		R_{n+1}^{A} &=&  c_{n+1}^{(2)} -c_{n}^{(2)} +5c_{n-2}^{(2)}	-4c_{n-4}^{(2)}-c_{n-5}^{(2)}.
	\end{eqnarray*}
	Let us shift the second equations of \eqref{eq:sys_3eq} again by $-1, -2$, and $1$ in  the subscript, and express $c_n$, $c_{n-1}$, $c_{n-2}$, and $c_{n-3}$ from them and the original one. Then substitute them into the third equation of \eqref{eq:sys_3eq}, and after shortening we gain   
	\begin{multline}\label{eq:sys_B}
		2r_{n}^{(2)}-6r_{n-1}^{(2)}-7r_{n-2}^{(2)} +14r_{n-3}^{(2)} +14r_{n-4}^{(2)} -2r_{n-5}^{(2)} -3r_{n-6}^{(2)} =\\
		c_{n+1}^{(2)}- 	3 c_n^{(2)} -2c_{n-1}^{(2)}	+ 6c_{n-2}^{(2)} -3c_{n-3}^{(2)} -9c_{n-4}^{(2)} +2c_{n-6}^{(2)}.
	\end{multline}
	Let  $B$ denote the equation \eqref{eq:sys_B}, in short form  
	\begin{equation*}\label{eq:sys_Bs}
		L_{n}^{B} = R_{n+1}^{B},
	\end{equation*}
	where the denotations are similar to \eqref{eq:sys_As}.

	In the succeeding part, by using the linear algebra we solve the system of 	
	\begin{equation}\label{eq:sys_RARB}\def\arraystretch{1.3}
		\begin{array}{rcl}
			L_{n}^{A}&=& 	R_{n+1}^{A},\\
			L_{n}^{B}&=& 	R_{n+1}^{B}.
		\end{array}
	\end{equation}
	
	Let us consider the 12-dimensional vector space, where a basis is defined by  $c_{n+1}^{(2)}$, $c_{n}^{(2)}$, $c_{n-1}^{(2)}$, \ldots, $c_{n-9}^{(2)}$, and $c_{n-10}^{(2)}$. Then the coordinates of $R_{n+1}^{A}$ and its shifted versions by $-1$, $-2$, $\ldots$, $-5$ are 
	\begin{equation*}\def\arraystretch{1.3}
		\begin{array}{rcl}
			R_{n+1}^{A}&=& 	(1, -1, 0, 5, 0, -4, -1, 0, 0, 0, 0,0),\\
			R_{n}^{A}&=& 	(0,1, -1, 0, 5, 0, -4, -1, 0, 0, 0, 0),\\
			& \vdots & \\
			R_{n-4}^{A}&=& 	(0, 0, 0, 0, 0, 1, -1, 0, 5, 0, -4, -1),
		\end{array}
	\end{equation*}
	respectively. 
	Similarly,

	\begin{equation*}\def\arraystretch{1.3}
		\begin{array}{rcl}
			R_{n+1}^{B}&=& 	(1, -3, -2, 6, -3, -9, 0, 2, 0, 0, 0, 0),\\
			R_{n}^{B}&=& 	(0,1, -3, -2, 6, -3, -9, 0, 2, 0, 0, 0),\\
			& \vdots & \\
			R_{n-3}^{B}&=& 	(0, 0, 0, 0,1, -3, -2, 6, -3, -9, 0, 2).
		\end{array}
	\end{equation*}
	
	We define the linear combinations $\alpha_0 R_{n+1}^{A} + \alpha_1 R_{n}^{A}+\cdots +\alpha_5 R_{n-4}^{A}$ and $\beta_0 R_{n+1}^{B} + \beta_1 R_{n}^{B}+\cdots +\beta_4 R_{n-3}^{B}$. Now, our question is: whether they  could be equal and for what $\alpha_j$ and $\beta_k$? For the answer, we have to solve the homogeneous linear vector equation   
	\[\sum_{j=0}^{5}\alpha_jR_{n-j+1}^{A} - \sum_{j=0}^{4}\beta_jR_{n-j+1}^{B}=\mathbf{0},\]
	where $\mathbf{0}$ is the zero vector. 
	Its matrix form is 
	\[ \mathbf{M} \mathbf{x}=\mathbf{0},\]
	where $\mathbf{x}=(\alpha_0,\ldots, \alpha_5,\beta_0,\ldots, \beta_4)$ and
	\[\def\arraystretch{0.1}\arraycolsep=2pt
	\mathbf{M}=
	\left[ \begin {array}{cccccc|ccccc} 1&0&0&0&0&0&-1&0&0&0&0
	\\ \noalign{\medskip}-1&1&0&0&0&0&3&-1&0&0&0\\ \noalign{\medskip}0&-1&
	1&0&0&0&2&3&-1&0&0\\ \noalign{\medskip}5&0&-1&1&0&0&-6&2&3&-1&0
	\\ \noalign{\medskip}0&5&0&-1&1&0&3&-6&2&3&-1\\ \noalign{\medskip}-4&0
	&5&0&-1&1&9&3&-6&2&3\\ \noalign{\medskip}-1&-4&0&5&0&-1&0&9&3&-6&2
	\\ \noalign{\medskip}0&-1&-4&0&5&0&-2&0&9&3&-6\\ \noalign{\medskip}0&0
	&-1&-4&0&5&0&-2&0&9&3\\ \noalign{\medskip}0&0&0&-1&-4&0&0&0&-2&0&9
	\\ \noalign{\medskip}0&0&0&0&-1&-4&0&0&0&-2&0\\ \noalign{\medskip}0&0&0
	&0&0&-1&0&0&0&0&-2\end {array} \right].
	\]
	After solving it we find that the solution vector with a free parameter $t$ is 
	\[ \mathbf{x}=(t,-5t,7t,-3t,-4t,2t, t,-3t,5t,-2t,-t).\]
	(We mention, since we got only one free parameter, then we must not reduce the dimension of our vector space.) Put $t=1$, then
	\[
	R_{n+1}^{A} -5R_{n}^{A}+ 7R_{n-1}^{A} -3R_{n-2}^{A} -4R_{n-3}^{A} +2R_{n-4}^{A} =	R_{n+1}^{B} -3R_{n}^{B} +5R_{n-1}^{B} -2R_{n-2}^{B} -R_{n-3}^{B}.
	\]
	It means from \eqref{eq:sys_RARB} that the expressions $\sum_{j=0}^{5}\alpha_jL_{n-j+1}^{A}$ and $\sum_{j=0}^{4}\beta_jL_{n-j+1}^{B}$ are also equal. Thus, after some calculations, we obtain 
	\begin{equation*}
		r_{n-1}^{(2)}-8r_{n-2}^{(2)}+ 17 r_{n-3}^{(2)} +7r_{n-4}^{(2)} -41r_{n-5}^{(2)} -r_{n-6}^{(2)} +23r_{n-7}^{(2)} -3r_{n-8}^{(2)} -4r_{n-9}^{(2)} +r_{n-10}^{(2)}	= 0,
	\end{equation*}
	which proves the first part of our main Theorem~\ref{th:main_2xn}. Recall $w_n=r_n^{(2)}$.
	
	Now, we show a connection between the recurrences of the tiling sequence $r_n$ and tiling-walking sequence $w_n$ given by the equations \eqref{eq:rcc} and  \eqref{eq:main_2xn}, respectively. Their characteristic polynomials are 
	\begin{equation*}
		p_{r_n}(x)=x^3-3x^2-x+1
	\end{equation*}
	and
	\begin{multline*}
		p_{w_n}(x)=x^9-8x^8+17x^7+7x^6-41x^5-x^4+23x^3-3x^2-4x+1\\ =(x+1)(x^2-3x+1)(x^3-3x^2-x+1)^2,
	\end{multline*}
	respectively. It shows that $w_n$ is en extension of $r_n$, moreover, considering the factorization of $p_{w_n}$ let us define the following functions $y_n$, $x_n$, and $\widetilde{r}_{n}$ recursively
	\begin{equation*}
		\begin{array}{rcl}
			y_{n}&=& w_n+w_{n-1},\\
			x_{n}&=& y_n-3y_{n-1}+y_{n-2},\\
			\widetilde{r}_{n}&=& x_n-3x_{n-1}-x_{n-2}+x_{n-3}
		\end{array}
	\end{equation*}
	using the recurrences  $w_n=-w_{n-1}$, $y_n=3y_{n-1}-y_{n-2}$, and  $x_n=3x_{n-1}+x_{n-2}-x_{n-3}$ with characteristic polynomials $x+1$, $x^2-3x+1$, and $x^3-3x^2-x+1$, respectively.
	Substitute $y_n$ into function $x_n$, then do $x_n$ into $\widetilde{r}_{n}$, and finally, substitute $\widetilde{r}_{n}$ into the recurrence equation \eqref{eq:rcc} instead of $r_n$, then with an easy calculation the result is \eqref{eq:main_2xn}.
	It proves the second  part of Theorem~\ref{th:main_2xn}.

	\subsection{Tiling and walking with only dominoes}
	
	In this subsection, we allow only dominoes for the tilings, and we prove Corollary~\ref{th:maincor_2xn}. In this case, we have to omit all the sequences $(a_n^{(j)})$ and $(c_n^{(j)})$, $j\in\{0, 1,2\}$, from systems \eqref{eq:sys_rabcd} and \eqref{eq:sys_r-d} since they contain tilings with squares. Thus, after some trivial substitution for $n\geq2$, we have 
	\begin{equation}\label{eq:sys_onlydom01} 
		\def\arraystretch{1.3}
		\begin{array}{rcl}
			r_n^{(2)}&=&r_{n-1}^{(2)}+r_{n-2}^{(2)}+r_{n-2}^{(1)}+r_{n},\\
			r_n^{(1)}&=&r_{n-2}^{(1)}+r_{n},\\
			r_n&=&r_{n-1}+r_{n-2}
		\end{array} 
	\end{equation}
	with initial values $r_0=1$, $r_1=1$, $r_0^{(2)}=1$, $r_1^{(2)}=2$, $r_0^{(1)}=1$, and $r_1^{(1)}=1$ (see Figure~\ref{fig:Til_walk_2n_init_domino}).
	From the first equation we express $r_{n}$, and we substitute it and its shifted versions into the third equation, moreover, subtract the second equation from the first, and we have the equations	
	\begin{equation}\label{eq:sys_onlydom02} 
		\def\arraystretch{1.3}
		\begin{array}{rcl}
			r_n^{(2)}-2r_{n-1}^{(2)}-r_{n-2}^{(2)}+2r_{n-3}^{(2)}+r_{n-4}^{(2)}&=&  r_{n-2}^{(1)}-r_{n-3}^{(1)}-r_{n-4}^{(1)},\\	r_n^{(2)}-r_{n}^{(1)}&=&r_{n-1}^{(2)}+r_{n-2}^{(2)}.
		\end{array} 
	\end{equation}
	Finally, from the second equation of \eqref{eq:sys_onlydom02} we express $r_{n}^{(1)}$ and substitute it and its shifted versions into the first one, then we obtain the final recurrence relation 	
	\begin{equation}\label{eq:seq_dom}
		r_n^{(2)}= 2r_{n-1}^{(2)}+2r_{n-2}^{(2)}-4r_{n-3}^{(2)}-2r_{n-4}^{(2)}+2r_{n-5}^{(2)}+r_{n-6}^{(2)}
	\end{equation}
	The initial values can be determined from \eqref{eq:sys_onlydom01}.
	
	\begin{figure}[!ht]
		\centering
		\includegraphics[scale=0.9]{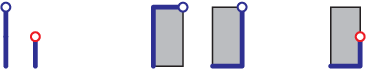} 
		\caption{Initial walks on domino tilings}
		\label{fig:Til_walk_2n_init_domino}
	\end{figure}
	
	The characteristic polynomial of \eqref{eq:seq_dom} is $(x-1)(x+1)(x^2-x-1)^2$ with roots $\alpha=(1+\sqrt{5})/2$, $\beta=(1-\sqrt{5})/2$,  and $\pm 1$, where the multiplicity of $\alpha$ and $\beta$ is 2. Because of the theorem of recursive sequences {(\cite[p.33]{sho})},  for all $n\geq 0$, the explicit form of $r_n^{(2)}$ is given by
	\begin{equation}\label{eq:dom_rec}
		r_n^{(2)}= A\cdot 1^n +B\cdot (-1)^n + (C+D\cdot n)\alpha^n + (E+F\cdot n)\beta^n,
	\end{equation}
	where $A, B, C, D, E, F$ are real constants.
	For $0\leq n\leq5$, the equations \eqref{eq:dom_rec} form a system of linear equations and it solutions are $A=1/2$, $B=1/2$, $C=3\sqrt5/25$, $D=(2+\sqrt 5)/5$, $E=-3\sqrt5/25$, and $F=(2-\sqrt 5)/5$. Thus, the exact explicit form is   
	\begin{equation}\label{eq:expl_rn}
		r_n^{(2)}= \frac12 +\frac{(-1)^n}{2} + \left(\frac{3\sqrt5}{25}  + \frac{2+\sqrt5}{5} n\right)\alpha^n +  \left(\frac{-3\sqrt5}{25}  + \frac{2-\sqrt5}{5} n\right)\beta^n.
	\end{equation}
	Since the $n$th term of the Fibonacci sequence is expressed by $\alpha$ and $\beta$ as follows
	\[F_n= \frac{\alpha^n-\beta^n}{\sqrt5},\]
	where $\alpha+\beta=1$, then we can express $r_n^{(2)}$ by the terms of the Fibonacci sequence. First, we show that  
	\begin{eqnarray*}
		2F_{n+1}-F_n &=& 2 \frac{\alpha\cdot{\alpha}^n - \beta\cdot\beta^n}{\sqrt5} -\frac{{\alpha} ^n - \beta ^n}{\sqrt5} = \frac{1}{\sqrt5}\left((2\alpha-1)\alpha^n+(1-2\beta)\beta^n )\right)\\
		&=&  \frac{1}{\sqrt5}\left((2\alpha-1)\alpha^n+(1-2(1-\alpha))\beta^n )\right)\\
		&=& \frac{2\alpha-1}{\sqrt5}(\alpha^n+\beta^n)=\frac{1+\sqrt5-1}{\sqrt5}(\alpha^n+\beta^n) =\alpha^n+\beta^n.
	\end{eqnarray*} 
	Then from \eqref{eq:expl_rn} we have
	\begin{eqnarray*}
		r_n^{(2)}&=& \frac{1+(-1)^n}{2}  + \frac{3\sqrt5}{25} (\alpha^n -\beta^n ) + 
		\left(\frac{2+\sqrt5}{5} \alpha^n + \frac{2-\sqrt5}{5} \beta^n \right) n\\
		&=& \frac{1+(-1)^n}{2}  + \frac{3}{5}\cdot  \frac{\alpha^n -\beta^n}{\sqrt5} + 
		\left( \frac{\alpha^n - \beta^n}{\sqrt5} +\frac{2}{5}( \alpha^n +  \beta^n)  \right) n\\
		&=&  \frac{1+(-1)^n}{2}  + \left(\frac{3}{5}+n \right)F_n+\frac{2}{5}( 2F_{n+1}-F_n)  n.
	\end{eqnarray*}
	Thus
	\begin{equation*}\label{eq:rn_Fibo_1}
		r_n^{(2)}= \frac{1+(-1)^n}{2}  + \frac{3}{5}(1+n)F_n + \frac{4n}{5} F_{n+1}.
	\end{equation*}
	Finally, for the even and odd indexes, we obtain
	\begin{eqnarray*}\label{eq:rn_Fibo_2}
		r_{2n}^{(2)} &=&   \frac{3}{5}(1+2n)F_{2n} + \frac{8n}{5} F_{2n+1}+1,\\
		r_{2n+1}^{(2)} &=&  \frac{3}{5}(2+2n)F_{2n+1} + \frac{4+8n}{5} F_{2n+2}.
	\end{eqnarray*} 
	This way, as $r_{n}=w_n$, we proved Corollary \ref{th:maincor_2xn}.
	
	Furthermore, from the explicit form \eqref{eq:expl_rn} we obtain that $w_n$ is asymptotic to the dominant part, so 
	\begin{equation}\label{eq:expl_asym}
		w_n\sim  \frac{1+(-1)^n}{2} + \left(\frac{3\sqrt5}{25}  + \frac{2+\sqrt5}{5} n\right)\alpha^n, \qquad \text{as }n\rightarrow \infty.
	\end{equation}
	The convergence is so quick, that for all $n\geq0$, the equation
	\begin{equation*}
		w_n  =\left\lceil \left(\frac{3\sqrt5}{25}  + \frac{2+\sqrt5}{5} n\right)\alpha^n \right\rceil
	\end{equation*}
	holds, where $\lceil . \rceil$ is the ceil function.
	
	
	

	\bibliography{Tiling_walking_bib.bib} 

\begin{thebibliography}{10}

\bibitem{Abbot}
H.~L. Abbott and D.~Hanson.
\newblock A lattice path problem.
\newblock {\em Ars Combin.}, 6:163--178, 1978.

\bibitem{Belb}
H.~Belbachir and A.~Belkhir.
\newblock Tiling approach to obtain identities for generalized {F}ibonacci and
  {L}ucas numbers.
\newblock {\em Ann. Math. Inform.}, 41:13--17, 2013.

\bibitem{benja}
A.~T. Benjamin.
\newblock Self-avoiding walks and {F}ibonacci numbers.
\newblock {\em Fibonacci Quart.}, 44:330--334, 2006.

\bibitem{BQ}
A.~T. Benjamin and J.~J. Quinn.
\newblock {\em {Proofs that Really Count: The Art of Combinatorial Proof}},
  volume~27.
\newblock American Mathematical Soc., 2003.

\bibitem{bmg}
M.~Bousquet-M{\'e}lou, A.~J. Guttmann, and I.~Jensen.
\newblock Self-avoiding walks crossing a square.
\newblock {\em J. Phys. A: Math. Gen.}, 38(42):Article 9159, 2005.
\newblock \href {https://doi.org/10.1088/0305-4470/38/42/001}
  {\path{doi:10.1088/0305-4470/38/42/001}}.

\bibitem{bur}
T.~W. Burkhardt and I.~Guim.
\newblock Self-avoiding walks that cross a square.
\newblock {\em J. Phys. A: Math. Gen.}, 24(20):Article L1221, 1991.
\newblock \href {https://doi.org/10.2969/jmsj/82588258}
  {\path{doi:10.2969/jmsj/82588258}}.

\bibitem{Kah}
R.~Kahkeshani.
\newblock The tilings of a ($2\times n$)-board and some new combinatorial
  identities.
\newblock {\em J. Integer Seq.}, 20(5):Article 17.5.4, 2017.

\bibitem{KNSz}
T.~{Komatsu}, L.~{N\'emeth}, and L.~{Szalay}.
\newblock {Tilings of hyperbolic $(2\times n)$-board with colored squares and
  dominoes}.
\newblock {\em {Ars Math. Contemp.}}, 15(2):337--346, 2018.
\newblock \href {https://doi.org/10.26493/1855-3974.1470.e79}
  {\path{doi:10.26493/1855-3974.1470.e79}}.

\bibitem{Koshy}
T.~Koshy.
\newblock {\em Fibonacci and Lucas Numbers with Applications}.
\newblock John Wiley \& Sons, Inc, 2001.
\newblock \href {https://doi.org/10.1002/9781118033067}
  {\path{doi:10.1002/9781118033067}}.

\bibitem{Fibo_rep01}
K.~Liptai, L.~N\'emeth, T.~Szak\'acs, and L.~Szalay.
\newblock On certain {F}ibonacci representation.
\newblock {\em Fibonacci Quart.}, submitted.

\bibitem{mad1}
N.~Madras.
\newblock Critical behaviour of self-avoiding walks that cross a square.
\newblock {\em J. Phys. A Math. Theor.}, 28(6):1535, 1995.

\bibitem{mad}
N.~Madras and G.~Slade.
\newblock Some combinatorial bounds.
\newblock In {\em The Self-Avoiding Walk}, pages 57--76. Springer New York, New
  York, NY, 2013.
\newblock \href {https://doi.org/10.1007/978-1-4614-6025-1_3}
  {\path{doi:10.1007/978-1-4614-6025-1_3}}.

\bibitem{MNSz}
L.~Major, L.~N\'emeth, A.~Pahikkala, and L.~Szalay.
\newblock Self-avoiding walks of specified lengths on rectangular grid graphs.
\newblock {\em Aequat. Math.}, 98(1):215--239, 2024.
\newblock \href {https://doi.org/10.1007/s00010-023-00977-8}
  {\path{doi:10.1007/s00010-023-00977-8}}.

\bibitem{ML}
R.~B. McQuistan and S.~J. Lichtman.
\newblock Exact recursion relation for 2$\times n$ arrays of dumbbells.
\newblock {\em J. Math. Phys.}, 11(10):3095--3099, 1970.
\newblock \href {https://doi.org/10.1063/1.1665098}
  {\path{doi:10.1063/1.1665098}}.

\bibitem{Ben}
J.~J. Quinn and A.~T. Benjamin.
\newblock The {F}ibonacci numbers -- exposed more discreetly.
\newblock {\em Math. Mag.}, 76(3):182--192, 2003.
\newblock \href {https://doi.org/10.2307/3219319} {\path{doi:10.2307/3219319}}.

\bibitem{sho}
T.~N. Shorey and R.~Tijdeman.
\newblock {\em Exponential Diophantine Equations}.
\newblock Cambridge University Press, 1986.

\bibitem{sla}
G.~Slade.
\newblock Self-avoiding walk on the complete graph.
\newblock {\em J. Math. Soc. Japan.}, 72(4):1189--1200, 2020.
\newblock \href {https://doi.org/10.2969/jmsj/82588258}
  {\path{doi:10.2969/jmsj/82588258}}.

\bibitem{oeis}
N.~J.~A. Sloane and The OEIS~Foundation Inc.
\newblock {The On-Line Encyclopedia of Integer Sequences}, 2020.
\newblock URL: \url{http://oeis.org/?language=english}.

\bibitem{whi}
S.~G. Whittington and A.~J. Guttmann.
\newblock Self-avoiding walks which cross a square.
\newblock {\em J. Phys. A: Math. Gen.}, 23(23):5601, 1990.
\newblock \href {https://doi.org/10.1088/0305-4470/23/23/030}
  {\path{doi:10.1088/0305-4470/23/23/030}}.

\bibitem{will}
L.~K. Williams.
\newblock Enumerating up-side self-avoiding walks on integer lattices.
\newblock {\em Electron. J. Combin.}, 3(1):R31, 1996.
\newblock \href {https://doi.org/10.37236/1255} {\path{doi:10.37236/1255}}.

\end{thebibliography}
	\bibliographystyle{plainurl}

\end{document}